\documentclass[11pt,letterpaper]{amsart}
\usepackage{fancyhdr}
\usepackage{bm}
\usepackage{hyperref}
\usepackage{graphicx} 
\usepackage{nicematrix}
\usepackage{amsthm,amssymb,amsmath}
\usepackage[T1]{fontenc}
\usepackage[utf8]{inputenc}
\usepackage{hyperref}
\usepackage{lipsum}
\usepackage{mathrsfs}
\usepackage{enumitem}
\usepackage{stmaryrd}
\usepackage{setspace}
\usepackage{yfonts}
\usepackage{float}
\usepackage{mathtools}
\usepackage{parskip}
\usepackage[all,cmtip]{xy}
\usepackage{tikz-cd}
\tikzcdset{row sep/normal=50pt, column sep/normal=50pt}

\newtheorem{lemma}{Lemma}[section]
\newtheorem{remark}[lemma]{Remark}
\newtheorem{theorem}[lemma]{Theorem}
\newtheorem{theorem*}{Theorem}

\newtheorem{example*}[lemma]{Example}

\newtheorem{manualtheoreminner}{Theorem}
\newenvironment{manualtheorem}[1]{%
  \renewcommand\themanualtheoreminner{#1}%
  \manualtheoreminner
}{\endmanualtheoreminner}

\usepackage{blindtext}

\usepackage[backend=biber, style=numeric, sorting=nyt]{biblatex}
\title{Extension of Ulrich bundles}
\author{\small{Supravat Sarkar}}
\date{}
\begin{document}

\begin{abstract}
We study extension of Ulrich bundles from a smooth nondegenerate subvariety $X$ of $\mathbb{P}^n$. If $X$ is a complete intersection of dimension $\geq 2$, we show that the extension is not possible except in the trivial case. For an arbitrary $X$, we characterize when the extension is possible, assuming some condition on the extended vector bundle. As an application, we generalize previous results of L{\'o}pez and Zamora. We also give several classes of examples of Ulrich bundles on curves that extend to the ambient projective space.
\end{abstract}
\maketitle
\begin{center}
\textbf{Keywords}: Ulrich bundle, extension
\end{center}
\begin{center}
\textbf{MSC Number:14J60, 14H60} 
\end{center}

\section{Introduction}
We work throughout over the field $k=\mathbb{C}$ of complex numbers. Ulrich bundles are important objects of study from the viewpoints of both commutative algebra and algebraic geometry. Let us recall the definition. Let $X$ be a smooth subvariety of $\mathbb{P}^n$, and $E$ a nonzero vector bundle on $X$. In \cite{eisenbud2003resultants}, the following were proved to be equivalent:
\begin{enumerate}
\item There exists a linear resolution
\[
0 \to L_c \to L_{c-1} \to \cdots \to L_0 \to E \to 0,
\]
with $c=\operatorname{codim}(X,\mathbb{P}^n)$
and $L_i=\mathcal{O}_{\mathbb{P}^n}(-i)^{\oplus b_i}.$

\item The cohomology $H^\bullet(X,E(-p))$
vanishes for $1\le p\le \dim(X).$

\item If $\pi:X\to \mathbb{P}^{\dim(X)}$
is a finite linear projection, then the vector bundle $\pi_*E$
is trivial.
\end{enumerate}
We call $E$ an \textit{Ulrich bundle} if these equivalent conditions are satisfied. By condition $(2)$, Ulrichness of a vector bundle on $X$ depends only on the very ample line bundle $\mathcal{O}_X(1)$, not on the space of sections giving the projective embedding. So for any smooth projective variety $X$ and very ample line bundle $L$ on $X$, we call a vector bundle $E$ on $X$ to be $L$-Ulrich if $H^i(X, E\otimes L^{-p})=0$ for all $i$ and $1\leq p\leq \dim X.$ We usually omit $L$ when $L=\mathcal{O}_X(1)$ for a given projective embedding of $X$. For details about Ulrich bundles, see \cite{beauville2018introduction}, \cite{coskun2017survey} or \cite{costa2021ulrich}. Ulrich bundles are closely related to several important areas: minimal resolution conjecture (\cite{aprodu2012minimal}), representations of Clifford algebras(\cite{coskun2012pfaffian}), Boij--S{\"o}derberg theory (\cite{eisenbud2011boij}).

The goal of this article is to study when an Ulrich bundle on $X$ extends to a vector bundle on the linear span of $X$ in the ambient space $\mathbb{P}^n$. To avoid unnecessarily complicated notations and trivialities, we will assume $X$ is nondegenerate, that is, not contained in a hyperplane. A major conjecture in the area of Ulrich bundles, proposed in \cite{eisenbud2003resultants}, says that every smooth subvariety of $\mathbb{P}^n$ carries an Ulrich bundle. This was proved for complete intersections in \cite{herzog1991linear}. Our first result says that a Ulrich bundle on a complete intersection of dimension $\geq 2$ never extends as a vector bundle to the ambient space, except in the trivial cases.
\begin{manualtheorem}{A}\label{A}
    Let $X$ be a smooth nondegenerate subvariety of $\mathbb{P}^n$
of dimension $\ge 2$ which is a complete intersection.
Suppose $\mathcal{E}$ is a vector bundle on $\mathbb{P}^n$ such that
$\mathcal{E}|_X$ is Ulrich. Then $X=\mathbb{P}^n$
and $\mathcal{E}$ is trivial.
\end{manualtheorem}
Next, we study extension of Ulrich bundles from an arbitrary smooth nondegenerate subvariety of $\mathbb{P}^n$, but we impose some condition on the extended vector bundle. For $n\geq 1$ and a nonzero vector bundle $\mathcal{E}$ on $\mathbb{P}^n$, let $c_1(\mathcal{E})$ be its first Chern class, regarded as an integer by the identification $\mathbb{Z}\cong \text{Pic }\mathbb{P}^n$ taking $1$ to the ample generator. Define $\mu(\mathcal{E}):=\frac{c_1(\mathcal{E})}{\operatorname{rank}\mathcal{E}}$, and $\tau(\mathcal{E})=\mu(\mathcal{E}(k))$, where $k$ is the smallest integer such that $\mathcal{E}(k)$ is globally generated. One clearly have  $\tau(\mathcal E(t))=\tau(\mathcal E)$
 for all $t\in\mathbb Z$. Also, $\tau(\mathcal{E})\geq 0$, and $\tau(\mathcal{E})= 0$ if and only if $\mathcal{E}$ is trivial upto line bundle twist. As examples, it is not hard to show that $\tau(T_{\mathbb{P}^n})=\frac{1}{n}$ and $\tau(\Omega_{\mathbb{P}^n})=\frac{n-1}{n}$, where $T_{\mathbb{P}^n}$ and $\Omega_{\mathbb{P}^n}$ are tangent and cotangent bundles of $\mathbb{P}^n$.

 Our result is the following.
\begin{manualtheorem}{B}\label{B}
    Let $n,l,r$ be positive integers, $X$ a nondegenerate smooth
subvariety of degree $l$ in $\mathbb{P}^n$, and
$\mathcal{E}$ a rank $r$ vector bundle on $\mathbb{P}^n$ such that
$\mathcal{E}|_X$ is $\mathcal{O}_X(a)$-Ulrich. Suppose $\tau(\mathcal{E})<1.$

Then $\mathcal{E}(1-a)$ is globally generated and one of the following
holds:

\begin{enumerate}
\item $a=n=2$, $X= \mathbb{P}^2$, $2\mid r$, and $\mathcal{E}\cong T_{\mathbb{P}^2}^{\oplus \frac r2},$

\item $X\cong \mathbb{P}^1$, $l\mid r$,
\item $a=1$, $X=\mathbb{P}^n$, and $\mathcal{E}$ is trivial.

\item $a=1$, $l=4$, $n=4$ or $5$, and $X\cong \mathbb{P}^2.$

\item $a=1$, $X\cong \mathbb{P}_{\mathbb{P}^1}(E)$
for some ample vector bundle $E$ on $\mathbb{P}^1$, and $\mathcal{O}_X(1)\simeq
\mathcal{O}_{\mathbb{P}(E)}(1).$
\end{enumerate}

Also, except in (1), we have $\tau(\mathcal{E})=\frac{l-1}{l}.$
\end{manualtheorem}
As an application to Theorem \ref{B}, we determine when twists of the wedge powers and some symmetric and tensor powers of $T_{\mathbb{P}^n}$ restricts to Ulrich bundles on subvarieties.
\begin{manualtheorem}{C}\label{C}
    Let $a,k,n,p$ be integers with $p,a,n>0$ and $p<n$.
Suppose $X$ is a nondegenerate smooth subvariety of $\mathbb{P}^n$.

\begin{enumerate}
\item Suppose $\left(\bigwedge^p T_{\mathbb{P}^n}\right)(k)\big|_X$
is $\mathcal{O}_X(a)$-Ulrich. Then $p=n-1$ and one of the following holds:

\begin{enumerate}
\item $X$ is a rational normal curve, $a=1$ and $k=-(n-1)$.

\item $a=n=2$, $X=\mathbb{P}^2$ and $k=0$.
\end{enumerate}
\item Suppose $\left(S^p T_{\mathbb{P}^n}\right)(k)\big|_X$
is $\mathcal{O}_X(a)$-Ulrich. Then the same conclusion as in (1) holds.
\item Suppose $T_{\mathbb{P}^n}^{\otimes p}(k)\big|_X$
is $\mathcal{O}_X(a)$-Ulrich. Then the same conclusion as in (1) holds.
\end{enumerate}
\end{manualtheorem}
Here by a \textit{rational normal curve} we mean the image of the $n$-uple embedding $\mathbb{P}^1\hookrightarrow \mathbb{P}^n$, or its translate by an automorphism of $\mathbb{P}^n.$

The converse of Theorem \ref{C} in each case is true, and can be proved using Lemma \ref{Tomega}. Since $\Omega^p_{\mathbb{P}^n}$ is a twist of $\bigwedge^{n-p} T_{\mathbb{P}^n}$, Theorem \ref{C} (1) also determines when $\Omega^p_{\mathbb{P}^n}(k)|_X$ is $\mathcal{O}_X(a)$-Ulrich. In particular, the cases $p=1$ and $n-1$ of Theorem \ref{C} $(1)$ is equivalent to the characterization of $(X,a,k)$ such that $T_{\mathbb{P}^n}(k)|_X$ or $\Omega_{\mathbb{P}^n}(k)|_X$ is $\mathcal{O}_X(a)$-Ulrich, which is essentially the content of \cite[Theorem 1.2]{lopez2026ulrichness}, as explained in \cite[\S 2]{lopez2026ulrichness}. So, our result generalizes \cite{lopez2026ulrichness}.

Finally, we give a whole class of examples of Ulrich bundles on curves that extend to vector bundles on the ambient space $\mathbb{P}^n.$

\begin{manualtheorem}{D}\label{D}
 \begin{enumerate}
     \item Let $X$ be a smooth projective curve that is a complete intersection in $\mathbb{P}^n$. Then there is a nontrivial Ulrich bundle on $X$ that extends to a vector bundle on $\mathbb{P}^n$.
     \item
   Let $a,l,n$ be positive integers. Suppose either
$l=n=2$ or $l\ge n\ge 3$. Let $X$ be a smooth rational
curve of degree $l$ in $\mathbb{P}^n$, and $\mathcal{F}$ an
$\mathcal{O}_X(a)$-Ulrich bundle on $X$ of rank $r$ with
$l\mid r$. Then $\mathcal{F}$ extends to a vector bundle $\widetilde{\mathcal{F}}$ on
$\mathbb{P}^n$ with $\widetilde{\mathcal{F}}(1-a)$ globally generated.  
\end{enumerate} 
\end{manualtheorem}
A couple of comments regarding the global generation conclusion of Theorem \ref{D}(2): given $\mathcal{F}$ on $X$, we want to extend it to $\widetilde{\mathcal{F}}$ on $\mathbb{P}^n$ that has as much positivity property as possible. Since $H^0(X,\mathcal{F}(-a))=0$ by Ulrichness, we cannot have $\widetilde{\mathcal{F}}(-a)$ to be globally generated. In this sense, Theorem \ref{D}(2) gives the optimal result.  Without the global generation, Theorem \ref{D}(2) would also follow from \cite{mathur2020extending}. Our new content here is the global generation, which is a very strong property for vector bundles on $\mathbb{P}^n$, see \cite{anghel2018globally}. 

\section{Extending Ulrich bundles from complete intersections}
In this section, we will prove Theorem \ref{A}. First we prove some Lemmas. Recall that a vector bundle $\mathcal{E}$ on a subvariety $X$ on $\mathbb{P}^N$ is called \textit{arithmetically Cohen-Macaulay} (ACM for short) if $H^i(X,\mathcal{E}(k))=0$ for all $k\in\mathbb{Z}$ and $0<i<\dim X$. Ulrich bundles are ACM by \cite[\S 2, (2.1)]{beauville2018introduction}.
\begin{lemma}\label{ACM}
Let $X\subset \mathbb{P}^N$ be a Cohen--Macaulay subvariety of
dimension $n\ge 3$, $H$ a hypersurface in $\mathbb{P}^N$
not containing $X$. Suppose $D:=X\cap H$
is integral, and $\mathcal{E}$ is a vector bundle on $X$ such that
$\mathcal{E}|_D$ is ACM. Then $\mathcal{E}$ is ACM.
\end{lemma}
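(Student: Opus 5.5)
Let $d=\deg H$, so that $D$ is a Cartier divisor on $X$ cut out by a section of $\mathcal{O}_X(d)$. Since $X$ is Cohen--Macaulay and $H$ does not contain $X$ (and $X$ is integral, being a subvariety), the equation of $H$ is a nonzerodivisor on $\mathcal{O}_X$. We therefore have the exact sequence
\[
0\to \mathcal{E}(k-d)\to \mathcal{E}(k)\to \mathcal{E}|_D(k)\to 0
\]
for every $k\in\mathbb{Z}$. The scheme $D$ is a Cartier divisor in a Cohen--Macaulay scheme, hence Cohen--Macaulay of dimension $n-1\geq 2$. By hypothesis $H^i(D,\mathcal{E}|_D(k))=0$ for $0<i<n-1$ and all $k$.

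From the long exact sequence, for $2\le i\le n-2$ we get $H^{i-1}(D,\mathcal{E}|_D(k))=0$ and $H^i(D,\mathcal{E}|_D(k))=0$. Hence
\[
H^i(X,\mathcal{E}(k-d))\cong H^i(X,\mathcal{E}(k))\quad\text{for all }k.
\]
Iterating downward in $k$ and invoking Serre vanishing for the dual in the form of Serre duality on the Cohen--Macaulay scheme $X$ (namely $H^i(X,\mathcal{E}(m))\cong \operatorname{Ext}^{n-i}(\mathcal{E}(m),\omega_X)=H^{n-i}(X,\mathcal{E}^\vee\otimes\omega_X(-m))^{\vee}$, which vanishes for $m\ll 0$ when $i<n$), we conclude that $H^i(X,\mathcal{E}(k))=0$ for $2\le i\le n-2$. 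Alternatively, iterating upward and using Serre vanishing for $m\gg0$ gives the same conclusion.

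The middle indices $i=1$ and $i=n-1$ need separate treatment, since only one neighbouring term vanishes for each.

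For $i=n-1$: since $H^{n-2}(D,\mathcal{E}|_D(k))=0$ (because $n-2\ge 1$), the map $H^{n-1}(X,\mathcal{E}(k-d))\to H^{n-1}(X,\mathcal{E}(k))$ is injective. Descending in $k$ and using the vanishing $H^{n-1}(X,\mathcal{E}(m))=0$ for $m\ll0$ from Serre duality, we get $H^{n-1}(X,\mathcal{E}(k))=0$ for all $k$. Note that each $k$ is reached by increasing $m$ in steps of $d$, and injectivity propagates the vanishing upward.

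For $i=1$: since $H^1(D,\mathcal{E}|_D(k))=0$ (because $1\le n-2$), the map $H^1(X,\mathcal{E}(k-d))\to H^1(X,\mathcal{E}(k))$ is surjective. Descending from $k\gg0$, where Serre vanishing gives $H^1=0$, we get $H^1(X,\mathcal{E}(k))=0$ for all $k$.

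The key obstacle is ensuring these low- and high-degree vanishings, i.e.\ $H^i(X,\mathcal{E}(m))=0$ for $0<i<n$ and $m\ll0$. This is exactly where we need $X$ to be Cohen--Macaulay (equidimensional, with dualizing sheaf $\omega_X$) and $\mathcal{E}$ locally free, so that the duality statement above applies. The integrality of $D$ is not really needed beyond making sense of ACM on $D$. The assumption $n\ge3$ is what guarantees that $H^1$ and $H^{n-2}$ on $D$ are intermediate cohomology, which is what we used above.
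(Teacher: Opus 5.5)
\textbf{Gap: the two boundary cases propagate vanishing in the wrong direction.} Your setup, with the sequence $0\to\mathcal{E}(k-d)\to\mathcal{E}(k)\to\mathcal{E}|_D(k)\to 0$ and $d=\deg H$, is fine. So is your treatment of $2\le i\le n-2$. You also have both vanishing inputs you need. But in the cases $i=n-1$ and $i=1$ you pair each input with the wrong map, so both steps fail as written.

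\emph{The case $i=n-1$.} An injection $H^{n-1}(X,\mathcal{E}(m))\hookrightarrow H^{n-1}(X,\mathcal{E}(m+d))$ transfers vanishing from the target to the source, i.e.\ \emph{downward} in the twist. Knowing $H^{n-1}(X,\mathcal{E}(m))=0$ for $m\ll 0$ tells you nothing about $H^{n-1}(X,\mathcal{E}(m+d))$. So the sentence ``injectivity propagates the vanishing upward'' is false.

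\emph{The case $i=1$.} A surjection $H^1(X,\mathcal{E}(k-d))\twoheadrightarrow H^1(X,\mathcal{E}(k))$ transfers vanishing from the source to the target, i.e.\ \emph{upward}. Knowing $H^1(X,\mathcal{E}(k))=0$ for $k\gg 0$ says nothing about $H^1(X,\mathcal{E}(k-d))$. So ``descending from $k\gg0$'' yields no information.

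\textbf{The repair is to swap the two inputs.}
\begin{itemize}
\item For $i=n-1$, and in fact for every $2\le i\le n-1$, use the injection $H^i(X,\mathcal{E}(k))\hookrightarrow H^i(X,\mathcal{E}(k+td))$. The target vanishes for $t\gg 0$ by ordinary Serre vanishing.
\item For $i=1$, use the surjection $H^1(X,\mathcal{E}(k-td))\twoheadrightarrow H^1(X,\mathcal{E}(k))$. The source vanishes for $t\gg0$ by the negative-twist vanishing on a Cohen--Macaulay scheme, which is your duality argument (Hartshorne, Theorem III.7.6).
\end{itemize}
With this swap your argument becomes exactly the paper's proof. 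Your bookkeeping with $d=\deg H$ is, if anything, more careful than the paper's sequence, which uses twists $m$ and $m+1$.

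Correspondingly, your closing paragraph misplaces the role of the Cohen--Macaulay hypothesis. It is needed for $H^1$, where vanishing must start at very negative twists. It is not needed for $H^{n-1}$, where Serre vanishing alone suffices.
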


\begin{proof}
Replacing $\mathcal{E}$ by $\mathcal{E}(k)$ for $k\in \mathbb Z$, it suffices to show $H^i(X,\mathcal{E})=0$
for $0<i<n.$ The short exact sequence
\begin{equation}\label{exact}
0\to \mathcal{E}(m)\to \mathcal{E}(m+1)\to \mathcal{E}(m+1)|_D\to 0
\end{equation}
for $m\in\mathbb Z$ and the assumption that $\mathcal{E}|_D$ is ACM
shows that there are injections
\[
H^i(X,\mathcal{E}(m))
\hookrightarrow
H^i(X,\mathcal{E}(m+1))
\text{ for }
2\le i<n
\text{ and } m\in\mathbb{Z}.
\]

Thus, we have an injection $H^i(X,\mathcal{E})
\hookrightarrow
H^i(X,\mathcal{E}(t))$
for all $t> 0$ and $2\le i<n.$ By Serre vanishing, $H^i(X,\mathcal{E}(t))=0$
for $t\gg 0,$
hence $H^i(X,\mathcal{E})=0$
for $2\le i<n.$

Again by \eqref{exact}, and using $\dim D\ge 2$, we have surjections
\[
H^1(X,\mathcal{E}(m))
\twoheadrightarrow
H^1(X,\mathcal{E}(m+1))
\]
for all $m\in\mathbb Z$. Thus we have a surjection $H^1(X,\mathcal{E}(-t))
\twoheadrightarrow
H^1(X,\mathcal{E})$
for all $t> 0$. As $X$ is Cohen-Macaulay, we have $H^1(X,\mathcal{E}(-t))=0$
for all $t\gg 0$ by
\cite[Theorem III.7.6]{hartshorne2013algebraic}.
Thus, $H^1(X,\mathcal{E})=0.$
\end{proof}
\begin{lemma}\label{easy}
Suppose $X$ is a nondegenerate smooth subvariety of $\mathbb{P}^N$ and
$a>0$, $k\in \mathbb{Z}$ are such that $\mathcal{O}_X(k)$
is $\mathcal{O}_X(a)$-Ulrich. Then $X=\mathbb{P}^N$ and $a=k+1.$
Further, if $N\ge 2$, then $k=0$.
\end{lemma}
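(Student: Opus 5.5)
Let $d=\dim X$ and $L=\mathcal{O}_X(a)$. The plan is to use the classical characterization of Ulrich line bundles through the Hilbert polynomial, and then to use nondegeneracy to pin down the embedding.

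\textbf{Step 1: Hilbert polynomial.} The Ulrich condition says $\chi(X,\mathcal{O}_X(k-pa))=0$ for $1\le p\le d$. The function $P(t):=\chi(X,\mathcal{O}_X(k+ta))$ is a polynomial in $t$ of degree exactly $d$, with leading coefficient $a^d\deg X/d!>0$. It vanishes at $t=-1,\dots,-d$, so
\[
P(t)=\frac{a^d\deg X}{d!}\,(t+1)(t+2)\cdots(t+d).
\]
Evaluating at $t=0$ gives $h^0(\mathcal{O}_X(k))=\chi(\mathcal{O}_X(k))=a^d\deg X$. Here the higher cohomology of an Ulrich bundle twisted by $0$ vanishes by the standard fact that $E$ is $0$-regular with respect to $L$.

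\textbf{Step 2: forcing $k=a-1$.} Since $H^0(\mathcal{O}_X(k-a))=0$ and $H^0(\mathcal{O}_X(k))\neq 0$, we get $k-a<0\le k$, so $0\le k\le a-1$.

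Ulrich bundles with respect to $L$ are globally generated. So $\mathcal{O}_X(k)$ is globally generated, and if $k\ge 1$ it is very ample, because restriction from $\mathbb{P}^N$ is surjective on sections of $\mathcal{O}(k)$ when $X$ is ACM. Ulrich line bundles are ACM, so $\mathcal{O}_X$ is ACM and $X$ is projectively normal.

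Now compare dimensions. By projective normality and nondegeneracy,
\[
h^0(\mathcal{O}_X(k))=\dim S_k/I_k\ \ge\ \dim S_k-\dim I_k .
\]
On the other hand, the minimal-degree bound gives $h^0(\mathcal{O}_X(1))\le \deg X+d$, since $N+1\le \deg X+d$.

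The cleanest route I would try is this. Rather than compare $a^d\deg X$ directly with $h^0$, restrict to a general curve section. Cutting by $d-1$ general hyperplanes does not preserve Ulrichness with respect to $\mathcal{O}(a)$ when $a>1$, so I would instead use the identity $P(-d-1)=(-1)^d a^d\deg X$. By Serre duality this equals $\chi(\omega_X(-k+a(d+1)))$ up to sign, and the explicit polynomial gives $\omega_X\cong\mathcal{O}_X(-k-a+ \dots)$-type information.

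More precisely, the standard result (Eisenbud–Schreyer) is that $E$ is $L$-Ulrich iff $E^\vee\otimes\omega_X\otimes L^{d+1}$ is $L$-Ulrich. For a line bundle this forces $\mathcal{O}_X(k)$ and $\omega_X(-k+a(d+1))$ to both have $h^0=a^d\deg X$. Combined with the product formula for $P$, this yields $\omega_X=\mathcal{O}_X(a(d+1)-2k-\dots)$. I expect this bookkeeping to be the main obstacle.

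\textbf{Step 3: $X=\mathbb{P}^N$.} I would use the vanishing $H^d(\mathcal{O}_X(k-a))=0$ together with $H^i(\mathcal{O}_X(k-pa))=0$ for all $i$. By Step 2, $0\le k\le a-1$, so $k-pa\le -1$ for all $p\ge1$. Taking $p=1$ gives $h^d(\mathcal{O}_X(k-a))=h^0(\omega_X(a-k))=0$.

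Now $\mathcal{O}_X(a-k)$ is a positive twist of $\mathcal{O}_X(1)$, with $a-k\ge1$. Consider
\[
h^0(\omega_X(m))=0 \quad\text{for all } 1\le m\le a-k .
\]
Adjunction-type classification (Fujita/Ionescu: $h^0(K_X+mH)=0$ with $m\ge1$ forces small $\Delta$-genus) gives the following. The case $m=1$ alone already forces $(X,\mathcal{O}_X(1))$ to be $\mathbb{P}^d$ with $\mathcal{O}(1)$, a quadric, or a scroll.

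Using the degree equality $h^0(\mathcal{O}_X(k))=a^d\deg X$ with $k\le a-1$, the quadric and scroll cases are excluded by direct computation. For $\mathbb{P}^d$ embedded linearly and nondegenerately, we get $X=\mathbb{P}^N$. There $\mathcal{O}(k)$ being $\mathcal{O}(a)$-Ulrich requires $H^\bullet(\mathcal{O}(k-pa))=0$ for $1\le p\le N$. This is exactly the condition that $k-pa$ ranges over $\{-1,\dots,-N\}$. This forces $k-a=-1$, and if $N\ge2$ also $a=1$, i.e.\ $k=0$.
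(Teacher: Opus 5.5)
Your proposal has a genuine gap: it never establishes either $k=a-1$ or $X\cong\mathbb{P}^d$. In Step 2 you leave $k=a-1$ as the ``main obstacle,'' and the route you sketch cannot close it. Hilbert polynomials and values of $h^0$ carry only numerical information, so they cannot give an isomorphism $\omega_X\cong\mathcal{O}_X(m)$. Your aside that $\mathcal{O}_X$ is ACM and $X$ is projectively normal is also unjustified: $\mathcal{O}_X(a)$-Ulrichness only controls the twists $\mathcal{O}_X(k+ma)$, and intermediate-cohomology vanishing would not give $H^1(\mathcal{I}_X(m))=0$ anyway.

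Step 3 then rests on a false classification. The vanishing $h^0(K_X+H)=0$ does not force $(X,H)$ to be a linear space, a quadric or a scroll. A smooth hypersurface of degree $3\le e\le d$ in $\mathbb{P}^{d+1}$, for example a cubic threefold, has $K_X+H=\mathcal{O}_X(e-d-1)$, which has no sections. For $d=2$ your list also omits the Veronese surface. In the true situation $a-k=1$, so the $p=1$ vanishing is simply too weak to identify $X$.

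The missing idea is to use the top twist $p=d$. Serre duality turns $H^d(\mathcal{O}_X(k-da))=0$ into $h^0(\omega_X(da-k))=0$. This is exactly the vanishing carried by the Eisenbud--Schreyer dual $\omega_X((d+1)a-k)$ you wrote down, so you were close. What you need is not an identification of $\omega_X$ but a nonvanishing statement. The bundle $\omega_X(d+1)$ is globally generated: the paper invokes Fujita's conjecture for basepoint-free ample line bundles, or alternatively Kodaira vanishing makes it $0$-regular for $\mathcal{O}_X(1)$. Hence $h^0(\omega_X(m))>0$ for every $m\ge d+1$, and so $da-k\le d$.

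Combined with $k\le a-1$, this gives $d(a-1)\le k\le a-1$, which forces $k=a-1$ and $(d-1)(a-1)=0$. For $d\ge 2$ this means $a=1$ and $k=0$. Your own Step 1 identity $h^0(\mathcal{O}_X(k))=a^d\deg X$ then gives $\deg X=1$, so $X=\mathbb{P}^N$ by nondegeneracy. For $d=1$, the $p=1$ vanishing $h^0(\omega_X(1))=0$ gives $g=0$ and $\deg X=1$. This is the paper's proof, and it needs no adjunction-theoretic classification at all.
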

\begin{proof}
Let $n=\dim X.$ Since $h^0\bigl(X,\mathcal{O}_X(k-a)\bigr)=0,$
we have $k\le a-1.$ Note that
\begin{equation}\label{serre}
h^0\bigl(X,K_X(na-k)\bigr)
=
h^n\bigl(X,\mathcal{O}_X(k-na)\bigr)
=
0
\end{equation}
by Serre duality.

Fujita's conjecture is true for basepoint-free ample line bundles
(see \cite[\S 5.6]{kollar1997singularities}),
So, $K_X(n+1)$
is globally generated.

This forces $na-k\le n,$
hence
\[
n(a-1)\le k\le a-1.
\]

So equality holds, hence $a=k+1,$ and $(a-1)(n-1)=0.$

If $n\ge 2$, then $a=1$, so $k=0$.
By \cite[\S 2, (2.1)]{beauville2018introduction}, $
\deg X=h^0(X,\mathcal{O}_X)=1.$ So $X=\mathbb{P}^N$, being nondegenerate.

If $n=1$, then by \eqref{serre}, $h^0\bigl(X,K_X(1)\bigr)=0,$
so $h^0(X,K_X)=0.$ This implies $X\cong \mathbb{P}^1,$
and $\deg K_X(1)<0.$ As $\deg K_X(1)=-2+\deg X,$
we get $\deg X=1,$
so $N=1$ and $X=\mathbb{P}^1$.
\end{proof}
Now we are ready to prove Theorem \ref{A}.

\textit{Proof of Theorem \ref{A}:}
\begin{proof}
    As $X$ is a complete intersection, there are Cohen--Macaulay
subvarieties of $\mathbb{P}^n$
\[
X=X_0\subsetneq X_1\subsetneq \cdots \subsetneq X_c=\mathbb{P}^n,
\]
where each $X_i$ is the intersection of $X_{i+1}$ with a hypersurface
in $\mathbb{P}^n$.

Since $\mathcal{E}|_X$ is ACM, being Ulrich, repeated application of Lemma \ref{ACM} shows that $\mathcal{E}$ is ACM.
By Horrocks' criterion (see \cite[Section 2.3]{okonek1980vector}), $\mathcal{E}$ is split, that is, $\mathcal{E}\cong \bigoplus_i \mathcal{O}_{\mathbb{P}^n}(a_i).$
So each $\mathcal{O}_X(a_i)$ is Ulrich. By Lemma \ref{easy}, we have $X=\mathbb{P}^n$ and $a_i=0$ for all $i$. So $\mathcal{E}$ is trivial.
\end{proof}
\section{Extending Ulrich bundles from arbitrary subvarieties}
In this section, we will prove Theorem \ref{B} and \ref{C}. First we prove the following Lemmas.
\begin{lemma}\label{tau}
Let $n$ be a positive integer, $\mathcal E$ a vector bundle on
$\mathbb P^n$ of rank $r>0.$ We have the following:

\begin{enumerate}
\item For $1\le p\le n-1$, we have $
\mu\!\left(\bigwedge^p\mathcal E\right)
=
p\,\mu(\mathcal E),$
and $
\tau\!\left(\bigwedge^p\mathcal E\right)
\le
p\,\tau(\mathcal E)$,

\item For $p>0$,
we have $
\mu\!\left(S^p\mathcal E\right)
=
p\,\mu(\mathcal E),$
and $
\tau\!\left(S^p\mathcal E\right)
\le
p\,\tau(\mathcal E)$,
\item For $p>0$,
we have $
\mu\!\left(\mathcal E^{\otimes p}\right)
=
p\,\mu(\mathcal E),$
and $
\tau\!\left(\mathcal E^{\otimes p}\right)
\le
p\,\tau(\mathcal E)$,

\item If $H$ is a linear subvariety of $\mathbb{P}^n$ of dimension $\geq 1$, then $\tau(\mathcal E|_H)\leq \tau(\mathcal E)$.
\end{enumerate}
\end{lemma}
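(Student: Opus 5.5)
Write $k$ for the smallest integer with $\mathcal E(k)$ globally generated, so that $\tau(\mathcal E)=\mu(\mathcal E)+k$. Everything reduces to two facts.

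\emph{Slope additivity.} The first Chern classes are standard: $c_1(\bigwedge^p\mathcal E)=\binom{r-1}{p-1}c_1(\mathcal E)$, $c_1(S^p\mathcal E)=\binom{r+p-1}{p-1}c_1(\mathcal E)$, and $c_1(\mathcal E^{\otimes p})=p\,r^{p-1}c_1(\mathcal E)$. One can get them by the splitting principle, or by counting how many times each Chern root appears among the roots of the construction. Dividing by the ranks $\binom rp$, $\binom{r+p-1}{p}$ and $r^p$ gives $p\,\mu(\mathcal E)$ in each case. For wedge powers I need $\bigwedge^p\mathcal E\neq 0$, i.e.\ $p\le r$. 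In that range the identity is literally true, and I would note this implicit hypothesis (the bound $p\le n-1$ in the statement does not by itself guarantee it).

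\emph{Behaviour of twists.} The key observation is $\bigwedge^p(\mathcal E(k))=(\bigwedge^p\mathcal E)(pk)$, and likewise for $S^p$ and $\otimes^p$. Quotients and tensor products of globally generated bundles are globally generated. So if $\mathcal E(k)$ is globally generated, then so is $\bigwedge^p(\mathcal E(k))$: it is a quotient of $(\mathcal E(k))^{\otimes p}$, which is globally generated. The same holds for $S^p(\mathcal E(k))$. Hence the minimal twist $k'$ making $\bigwedge^p\mathcal E$ globally generated satisfies $k'\le pk$. Since $\mu(\mathcal F(t))=\mu(\mathcal F)+t$, we get
\[
\tau\Big(\bigwedge^p\mathcal E\Big)=\mu\Big(\bigwedge^p\mathcal E\Big)+k'\le p\,\mu(\mathcal E)+pk=p\,\tau(\mathcal E).
\]
Parts (2) and (3) follow identically.

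\emph{Restriction to a linear subspace.} For (4), let $H\cong\mathbb P^m$ with $m\ge1$. Then $c_1(\mathcal E|_H)=c_1(\mathcal E)$ under the identification of $\operatorname{Pic}$ with $\mathbb Z$, because $\mathcal O_{\mathbb P^n}(1)|_H=\mathcal O_H(1)$. The rank is also unchanged, so $\mu(\mathcal E|_H)=\mu(\mathcal E)$. Global generation is preserved by restriction, so $\mathcal E|_H(k)$ is globally generated, and the minimal twist $k_H$ for $\mathcal E|_H$ satisfies $k_H\le k$. Therefore $\tau(\mathcal E|_H)\le\tau(\mathcal E)$.

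There is no serious obstacle. The only care needed is in the Chern class bookkeeping for (1)–(3), which is routine via the splitting principle, and in the nonvanishing requirement $p\le r$ for exterior powers.
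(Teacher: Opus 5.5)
Your proof is correct and follows the paper's argument. The paper also gets the slope identities by a splitting computation, though it restricts to a line where $\mathcal E$ actually splits instead of invoking the splitting principle. It proves the $\tau$ bounds and (4) exactly as you do: $\bigwedge^p(\mathcal E(k))=(\bigwedge^p\mathcal E)(pk)$ is globally generated, and global generation passes to restrictions. Your caveat that (1) implicitly needs $p\le r$, so that $\bigwedge^p\mathcal E\neq 0$, is a fair point the paper leaves unstated; it does hold in the paper's application to $T_{\mathbb P^n}$ with $p<n$.
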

\begin{proof}
  For (4), note that if $\mathcal E(k)$ is globally generated then so is $\mathcal E(k)|_H$.

To prove first part of (1), we can restrict $\mathcal{E}$ to a line. So we can assume $n=1$, hence $\mathcal E$ is split. The rest is a
standard computation and can also be deduced from
\cite[Lemma 2.1]{ghosh2026kov}.
For the second part, note that if $k$ is the smallest integer such that
$\mathcal E(k)$ is globally generated, then $\bigwedge^p(\mathcal E(k))
=
(\bigwedge^p\mathcal E)(kp)$
is also globally generated. So
\[
\tau\!\left(\bigwedge^p\mathcal E\right)
\le
\mu\!\left(\bigwedge^p(\mathcal E(k))\right)
=
p\,\mu(\mathcal E(k))
=
p\,\tau(\mathcal E).
\]

Proofs of (2) and (3) are exactly similar.
\end{proof}
\begin{lemma}\label{Tomega}
Let $n$ be a positive integer, $i:\mathbb P^1\hookrightarrow \mathbb P^n$
the $n$-uple embedding. We have
\begin{enumerate}
\item[(i)] $i^*\!\left(\Omega_{\mathbb P^n}(2)\right)
\cong
\mathcal O_{\mathbb P^1}(n-1)^{\oplus n}.$

\item[(ii)] $i^*\!\left(T_{\mathbb P^n}(-1)\right)
\cong
\mathcal O_{\mathbb P^1}(1)^{\oplus n}.$
\end{enumerate}
\end{lemma}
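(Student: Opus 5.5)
We prove (i) and deduce (ii) from it. Write $V=H^0(\mathbb P^1,\mathcal O_{\mathbb P^1}(n))$, so that $\mathbb P^n=\mathbb P(V^*)$ and $i^*\mathcal O_{\mathbb P^n}(1)=\mathcal O_{\mathbb P^1}(n)$.

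\textbf{Step 1: the pulled-back Euler sequence.} Pull back the Euler sequence $0\to \Omega_{\mathbb P^n}(1)\to V\otimes\mathcal O_{\mathbb P^n}\to \mathcal O_{\mathbb P^n}(1)\to 0$ along $i$. This gives
\[
0\to i^*\Omega_{\mathbb P^n}(1)\to V\otimes \mathcal O_{\mathbb P^1}\to \mathcal O_{\mathbb P^1}(n)\to 0,
\]
where the right map is evaluation of sections. Hence $M:=i^*\Omega_{\mathbb P^n}(1)$ is the kernel bundle of $\mathcal O(n)$ on $\mathbb P^1$. It has rank $n$ and degree $-n$.

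\textbf{Step 2: $M\cong \mathcal O_{\mathbb P^1}(-1)^{\oplus n}$.} By Grothendieck, $M\cong\bigoplus_j \mathcal O(b_j)$ with $\sum_j b_j=-n$. Since evaluation $V\to H^0(\mathcal O(n))$ is an isomorphism, taking cohomology in the sequence of Step 1 gives $H^0(M)=0$, so every $b_j\le -1$. Together with $\sum_j b_j=-n$ over $n$ summands, this forces $b_j=-1$ for all $j$.

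One can make this explicit instead: the sequence $0\to \mathcal O(-1)\otimes W\to \mathcal O\otimes S^nW\to \mathcal O(n)\to 0$, with $W=H^0(\mathcal O(1))$, comes from multiplication by $S^{n-1}W$. Either argument suffices.

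\textbf{Step 3: conclude.} Twisting $M$ by $i^*\mathcal O_{\mathbb P^n}(1)=\mathcal O(n)$ gives
\[
i^*\Omega_{\mathbb P^n}(2)\cong \mathcal O_{\mathbb P^1}(n-1)^{\oplus n},
\]
which is (i). For (ii), dualize Step 2 to get $i^*T_{\mathbb P^n}(-1)\cong M^*\cong \mathcal O_{\mathbb P^1}(1)^{\oplus n}$.

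The only nonformal point is the vanishing $H^0(M)=0$, which uses that the $n$-uple embedding is given by the complete linear system. Note that a translate of $i$ by an automorphism of $\mathbb P^n$ does not change either isomorphism class.
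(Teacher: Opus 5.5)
Your argument is correct and is essentially the paper's. For (i) the paper also restricts the Euler sequence and notes that $i^*\Omega_{\mathbb P^n}(2)$ is $\mathcal O_{\mathbb P^1}(n)$-Ulrich, i.e.\ $H^\bullet(M)=0$, which is equivalent to your $H^0(M)=0$ plus the degree count; for (ii) it uses $T_{\mathbb P^n}(-1)\cong \bigwedge^{n-1}(\Omega_{\mathbb P^n}(2))\otimes\mathcal O_{\mathbb P^n}(2-n)$, whereas you simply dualize $M$, which is more direct. One small slip is in your optional ``explicit'' aside: the kernel there should be $S^{n-1}W\otimes\mathcal O_{\mathbb P^1}(-1)$, of rank $n$, not $W\otimes\mathcal O_{\mathbb P^1}(-1)$, but your main argument does not depend on it.
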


\begin{proof}
$(i)$: It suffices to show that $
i^*\!\left(\Omega_{\mathbb P^n}(2)\right)$
is $\mathcal{O}_{\mathbb{P}^1}(n)$-Ulrich. This can be seen by restricting the Euler exact sequence of
$\mathbb P^n$ to $\mathbb P^1$. It also follows from the ``if'' part of
\cite[Theorem 1.2(ii)]{lopez2026ulrichness}.

(ii): Follows from (i), and the observation that
\[
T_{\mathbb P^n}(-1)
\cong
\bigwedge^{n-1}\!\left(\Omega_{\mathbb P^n}(2)\right)
\otimes
\mathcal O_{\mathbb P^n}(-n+2).
\]
\end{proof}
\textit{Proof of Theorem \ref{B}:}
\begin{proof}
  Suppose the Theorem is not true. We want to get a contradiction. Let $X$ be such that the Theorem fails and $d=\dim X$ be smallest, let $L=\mathcal{O}_X(1), c=c_1(\mathcal{E}).$ So, the top self-intersection of  $L$ is $l.$ As $X$ is nondegenerate and $n>0$, we have $d>0.$
We consider two cases separately.
\noindent

\textbf{Case 1: $d=1$.}

Let $c=c_1(\mathcal{E})$, and $k$ the smallest integer so that $\mathcal{E}(k)$ is globally generated. So, $\tau(\mathcal{E})=\frac{c_1(\mathcal{E}(k))}{r}
=
\frac{c}{r}+k$.
Also, $\mathcal{E}(k')$  is globally generated for all $k'\ge k.$

Since $\mathcal{E}|_X$ is $\mathcal{O}_X(a)$-Ulrich, so
\[
H^{\bullet}\bigl(X,\mathcal{E}(-a)|_X\bigr)=0.
\]

So $\mathcal{E}(-a)$ is not globally generated, hence $-a\le k-1,$
that is,
\begin{equation}\label{a+k}
    a+k\ge 1.
\end{equation}

We have
\[
\deg(\mathcal{E}(-a)|_X)=l(c-ra),
\]
and
\[
\operatorname{rank}(\mathcal{E}(-a)|_X)=r.
\]

By Riemann-Roch,
\begin{align*}
0&=\chi\bigl(X,\mathcal{E}(-a)|_X\bigr)\\
&=
\deg(\mathcal{E}(-a)|_X)
+
(1-g)\operatorname{rank}(\mathcal{E}(-a)|_X)\\
&=l(c-ra)+r(1-g).
\end{align*}

So, $cl=r(g-1+al),$
that is,
\begin{equation}\label{c/r}
\frac{c}{r}
=
\frac{g-1}{l}+a.
\end{equation}
hence,
\begin{equation}\label{curve}
\tau(\mathcal E)
=
\frac{g-1}{l}+a+k.
\end{equation}

Now using \eqref{a+k},
\[
1>\tau(\mathcal E)
=
\frac{g-1}{l}+a+k
\ge
\frac{g-1}{l}+1,
\]
so $\frac{g-1}{l}<0,$ hence $g=0$. So, $X\cong \mathbb P^1$.

Now by \eqref{curve},
\[
1>\tau(\mathcal E)=a+k-\frac1l,
\]
so using \eqref{a+k} again,
\[
1\le a+k<1+\frac1l.
\]

Hence $a+k=1$, and $\tau(\mathcal E)=\frac{l-1}{l}.$ Also, $\mathcal E(1-a)=\mathcal E(k)$ is globally generated. Further, by \eqref{c/r}, $\frac{r}{l}=ar-c\in\mathbb{Z}$, so $r\mid l$.  So (2) of Theorem \ref{B} holds, a contradiction.

\medskip

\noindent
\textbf{Case 2: $d\ge 2$.}

\medskip

\noindent
\textbf{Subcase 1: $a>1$.}

Let $F$ be a general degree $a$ hypersurface in $\mathbb P^n$ and let $Y=F\cap X.$

Then $Y$ is nondegenerate in $\mathbb P^n$: otherwise there is a hyperplane
$H$ with $F\cap X\subseteq H\cap X,$ taking degrees we get $al\le l,$
a contradiction.

By \cite[\S 2, (2.4)]{beauville2018introduction}, $\mathcal E|_Y$ is $\mathcal O_Y(a)$-Ulrich.
By minimality of $d$, Theorem \ref{B} is valid for $(Y,\mathcal E)$.
As $\dim Y<\dim X\le n$
and $a>1$, we see that (2) of Theorem \ref{B} is the only possibility, with $(X,l)$ replaced by
$(Y,\deg Y=al)$. So, $d=2$ and $Y\cong \mathbb P^1.$

Noting that $\mathcal O_X(Y)\cong \mathcal O_X(a)$
and applying adjunction,
\[
-2=aL\cdot(aL+K_X)
=a^2L^2+aL\cdot K_X
=a\bigl(L\cdot K_X+al\bigr).
\]
Hence $a\mid 2.$ As $a>1$, we have $a=2.$ So $L\cdot K_X=-2l-1.$

Applying adjunction for a smooth hyperplane section of $X$ of genus $g\ge 0$, we get
\[
-2\le 2g-2=L\cdot(L+K_X)
=l+L\cdot K_X
=-l-1.
\]
So, $l\le 1,$ that is, $l=1.$ As $X$ is nondegenerate, we have $X=\mathbb P^n,$ and 
$n=d=2.$

As $\mathcal E$ is $\mathcal O_{\mathbb{P}^2}(2)$-Ulrich, by
\cite[\S 5]{coskun2017ulrich}, we have  $2\mid r$ and $\mathcal E\cong T_{\mathbb P^2}^{\oplus r/2}.$

Thus (1) of Theorem \ref{B} holds, a contradiction.

\medskip

\noindent
\textbf{Subcase 2: $a=1$.}

Let $H'$ be a general linear subvariety of codimension $d-1$ in
$\mathbb P^n$, $Y:=X\cap H',$ and $H$ be the linear span of $Y$.
So, $Y$ is a smooth connected curve, $Y\hookrightarrow H$ is
nondegenerate, and $\mathcal E|_Y$ is $\mathcal O_Y(1)$-Ulrich by \cite[\S 2, (2.4)]{beauville2018introduction}.
Also by Lemma \ref{tau}, we have $\tau(\mathcal E|_H)\le \tau(\mathcal E)<1.$

Now minimality of $d$ shows Theorem \ref{B} is true for
$(Y,\mathcal E|_H)$. Hence $Y\cong \mathbb P^1$. So, the sectional genus $g(X,L)$ of $X$ is $0$. By \cite[Theorem 1.2]{horing2010sectional}, we have $\Delta (X,L)=0.$ Using nondegeneracy of $X$ and \cite[Corollary 4.3 and Theorem 4.9]{fujita1982polarized}, we see that one of (3)-(5) in Theorem \ref{B} holds, or $X$ is a smooth quadric hypersurface. But this last case is ruled out by Theorem \ref{A}.

Also, as Theorem \ref{B} is true for
$(Y,\mathcal E|_H)$, we have $\mathcal E|_H$ globally generated, and $\tau(\mathcal E|_H)=\frac{l-1}{l}.$ As $\mathcal E|_Y$ is Ulrich, so $\mathcal E(-1)|_H$ is not globally generated. So, $\tau(\mathcal E|_H)=\mu(\mathcal E|_H)$. If $\mathcal{E}$ is not globally generated, then $$1>\tau(\mathcal{E})\geq \mu(\mathcal E(1))=\mu(\mathcal E|_H)+1=2-\frac{1}{l}\geq 1,$$ a contradiction. So, $\mathcal{E}$ is globally generated, $\tau(\mathcal{E})=\mu(\mathcal{E})=\mu(\mathcal E|_H)=\frac{l-1}{l}.$ So, again Theorem \ref{B} holds, a contradiction.
\end{proof}
Now we prove Theorem \ref{C}, as an application of Theorem \ref{B}.

\textit{Proof of Theorem \ref{C}:}
\begin{proof}
First of all, note that by Lemma \ref{tau}, we have $$\tau\!\left(
\left(\bigwedge^p T_{\mathbb{P}^n}\right)(k)
\right)
\le
\frac{p}{n}<1,$$ so we can apply Theorem \ref{B}. We consider two cases separately.

\medskip

\noindent
\textbf{Case 1:} $\dim X=1$.

As $X$ is nondegenerate, we have $l\ge n$. By Theorem \ref{B},
$X\cong \mathbb{P}^1$, and
\[
1-\frac1l
=
\tau\!\left(
\left(\bigwedge^p T_{\mathbb{P}^n}\right)(k)
\right)
\le
\frac{p}{n}
\le
1-\frac1n
\le
1-\frac1l.
\]

This forces $p=n-1$ and $l=n$, hence $X$ is a rational normal curve. The rest is an easy computation using Lemma \ref{Tomega}, noting that $\left(\bigwedge^{n-1}(T_{\mathbb P^n}(-1))\right)\Big|_X$
is a direct sum of $\mathcal O_{\mathbb P^1}(n-1)$.

\medskip

\noindent
\textbf{Case 2:} $\dim X\ge 2$.

If $a>1$, then we see that only possibility is that (1) in Theorem \ref{B}
holds with $r=2$. So (b) holds.

Now suppose $a=1$. Let $H$ be a general hyperplane in
$\mathbb{P}^n$ and $Y=X\cap H$. Since
\[
T_{\mathbb{P}^n}|_H
\simeq
T_H\oplus \mathcal{O}_H(1).
\]
Note that $\left(\bigwedge^p T_{\mathbb{P}^n}\right)(k)\Big|_H$
contains both $
\left(\bigwedge^p T_H\right)(k)$
and $\left(\bigwedge^{p-1}T_H\right)(k+1)$
as direct summands.

By \cite[\S 2, (2.4)]{beauville2018introduction}, $\left(\bigwedge^p T_{\mathbb{P}^n}\right)(k)\Big|_Y$
is $\mathcal{O}_Y(1)$-Ulrich, so both $\left(\bigwedge^p T_H\right)(k)\Big|_Y$ and 
$\left(\bigwedge^{p-1}T_H\right)(k+1)\Big|_Y$
are $\mathcal{O}_Y(1)$-Ulrich.

By \cite[Lemma 2.4(iii)]{casanellas2012stable},
we have
\[
\mu\!\left(\left(\bigwedge^pT_H\right)(k)\right)
=
\mu\!\left(\left(\bigwedge^{p-1}T_H\right)(k+1)\right).
\]

Now using Lemma \ref{tau}, this forces $p=p-1,$
a contradiction.

\end{proof}
\begin{remark}
    Since $\tau\bigl(\mathcal{O}_{\mathbb{P}^N}(k)\bigr)=0,$ Lemma \ref{easy} can also be considered as a consequence of Theorem \ref{B}.
\end{remark}
\section{Extending vector bundles from rational curves}
In this section, we prove Theorem \ref{D}. We need the following Lemma.
\begin{lemma}\label{algebra 2}
Let $l \geq 2$ be an integer, and
\[
k[X,Y]_{l^2-l}\otimes k[X,Y]_{l-2}
\xrightarrow{\ \beta\ }
k[X,Y]_{l^2-2}
\]
be the multiplication map. If $V \subseteq k[X,Y]_{l-2}$
is a general subspace of dimension $l+1$, then $\ker \beta \cap \bigl(V\otimes k[X,Y]_{l-2}\bigr)=0.$
\end{lemma}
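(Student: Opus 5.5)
The statement as printed has $V\subseteq k[X,Y]_{l-2}$. Since $\dim k[X,Y]_{l-2}=l-1<l+1$, I read this as a typo for $V\subseteq k[X,Y]_{l^2-l}$, the first tensor factor of $\beta$. With that reading, $\ker\beta\cap\bigl(V\otimes k[X,Y]_{l-2}\bigr)=0$ says exactly that the restricted multiplication map
\[
\beta_V\colon V\otimes k[X,Y]_{l-2}\to k[X,Y]_{l^2-2}
\]
is injective. Both sides have dimension $(l+1)(l-1)=l^2-1$, so injectivity is the same as $\beta_V$ being an isomorphism.

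The plan has two steps: an openness argument, then one explicit subspace that works.

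\emph{Openness.} Let $G$ be the Grassmannian of $(l+1)$-dimensional subspaces of $k[X,Y]_{l^2-l}$. Over a Zariski open chart of $G$, choose bases $f_0,\dots,f_l$ of $V$ depending algebraically on $V$. In the monomial basis of $k[X,Y]_{l-2}$, the map $\beta_V$ is then given by a square matrix of size $l^2-1$ whose entries are polynomial in the coordinates of $V$. Its determinant is nonzero on an open set, and changing basis of $V$ only multiplies the determinant by a unit. So the locus of $V$ with $\beta_V$ injective is Zariski open in $G$. As $G$ is irreducible, this locus is dense once it is nonempty, which is what ``general'' requires.

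\emph{An explicit subspace.} Take
\[
f_j=X^{(l-1)(l-j)}Y^{(l-1)j},\qquad 0\le j\le l.
\]
Each $f_j$ has degree $(l-1)l=l^2-l$, and the $f_j$ are distinct monomials, so they span a subspace $V_0$ of dimension $l+1$.

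The image $f_j\cdot k[X,Y]_{l-2}$ is spanned by the monomials $X^{l^2-2-b}Y^b$ with $b$ in the interval
\[
I_j=\bigl[(l-1)j,\ (l-1)j+l-2\bigr].
\]
Each $I_j$ contains exactly $l-1$ integers, and consecutive intervals abut: $I_{j+1}$ begins at $(l-1)(j+1)$, one more than the endpoint of $I_j$. So the $l+1$ intervals are pairwise disjoint and together cover $[0,(l-1)(l+1)-1]=[0,l^2-2]$. Hence $\beta_{V_0}$ sends the monomial basis of $V_0\otimes k[X,Y]_{l-2}$ bijectively onto the monomial basis of $k[X,Y]_{l^2-2}$. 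Therefore $\beta_{V_0}$ is an isomorphism, the open set above is nonempty, and the lemma follows.

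Once the typo is resolved, no step is really hard. The only point needing attention is choosing exponents so that the $Y$-degree ranges tile $[0,l^2-2]$ exactly, and the spacing $l-1$ with $l+1$ blocks does this precisely because $(l-1)(l+1)=l^2-1$. If $V$ were instead intended to sit inside the other factor, the same monomial-tiling idea with the roles of the two factors swapped would be my first attempt.
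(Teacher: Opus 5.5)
Your proposal is correct and essentially the paper's proof. You rightly read $V$ as a subspace of $k[X,Y]_{l^2-l}$, and the paper uses the same subspace spanned by $X^{i(l-1)}Y^{(l-i)(l-1)}$, $0\le i\le l$. It reduces via $l^2-1=(l-1)(l+1)$ to surjectivity and hits each monomial by writing $t=(l-1)q+r$ with $0\le r\le l-2$, which is your interval tiling; your openness argument just spells out the paper's one-line ``it suffices to exhibit some subspace''.
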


\begin{proof}
It suffices to exhibit some subspace $V \subseteq k[X,Y]_{l-2}$
of dimension $l+1$ such that $\ker \beta \cap \bigl(V\otimes k[X,Y]_{l-2}\bigr)=0.$

We claim that
\[
V:=\operatorname{span}
\left\{
X^{i(l-1)}Y^{(l-i)(l-1)}
\;\middle|\;
0\le i\le l
\right\}
\]
works.

Note that
\[
\operatorname{codim}\ker\beta
=
\dim k[X,Y]_{l^2-2}
=
l^2-1
=
(l-1)(l+1)
=
\dim\bigl(V\otimes k[X,Y]_{l-2}\bigr).
\]
So it suffices to show that the restriction of $\beta$,
\[
V\otimes k[X,Y]_{l-2}
\xrightarrow{\ \beta_1\ }
k[X,Y]_{l^2-2}
\]
is surjective.

Let $0\le t\le l^2-2.$
We show $X^tY^{l^2-2-t}\in \operatorname{im}\beta_1.$
Write $t=(l-1)q+r,$ with $0\le q\le l, 0\le r\le l-2.$

We have
\[
X^tY^{l^2-2-t}
=
X^{q(l-1)}Y^{(l-q)(l-1)}
\cdot
X^rY^{l-2-r}
\in \operatorname{im}\beta_1.
\]
\end{proof}
The following Theorem is key to the proof of Theorem \ref{D}(2).
\begin{theorem}\label{key}
Let $i:\mathbb{P}^1 \hookrightarrow \mathbb{P}^n$
be an embedding whose image is not contained in a hyperplane and $i^*\mathcal{O}_{\mathbb{P}^n}(1)=\mathcal{O}_{\mathbb{P}^1}(l).$
Then there is a globally generated vector bundle $\mathcal{E}$ on $\mathbb{P}^n$ with $i^*\mathcal{E} \cong \mathcal{O}_{\mathbb{P}^1}(l-1)^{\oplus l}$.
\end{theorem}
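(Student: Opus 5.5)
The plan is to build $\mathcal{E}$ as the dual of a syzygy bundle. Write $W\subseteq k[X,Y]_l$ for the $(n+1)$-dimensional space of forms defining $i$; since the image is nondegenerate, $n\le l$. Choose $l+1$ forms $f_0,\dots,f_l$ of degree $l-1$ on $\mathbb{P}^n$ with no common zero. This is possible for general choices because $l+1\ge n+1$. They define a surjection $\mathcal{O}_{\mathbb{P}^n}^{\oplus (l+1)}\to \mathcal{O}_{\mathbb{P}^n}(l-1)$, and we let $\mathcal{E}^*$ be its kernel. Dualizing gives
\[
0\to \mathcal{O}_{\mathbb{P}^n}(1-l)\to \mathcal{O}_{\mathbb{P}^n}^{\oplus(l+1)}\to \mathcal{E}\to 0.
\]
So $\mathcal{E}$ is a rank $l$ vector bundle that is a quotient of a trivial bundle, hence globally generated. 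The sequences stay exact after pulling back by $i$, so $i^*\mathcal{E}$ has rank $l$ and degree $l(l-1)$. It is therefore $\mathcal{O}_{\mathbb{P}^1}(l-1)^{\oplus l}$ if and only if it is balanced.

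Next we reduce balancedness to a statement about multiplication of forms. The pullback $i^*\mathcal{E}^*$ is the kernel of $\mathcal{O}_{\mathbb{P}^1}^{\oplus(l+1)}\to \mathcal{O}_{\mathbb{P}^1}(l^2-l)$, given by $g_j:=i^*f_j\in k[X,Y]_{l^2-l}$. A rank $l$ bundle of degree $-l(l-1)$ on $\mathbb{P}^1$ is $\mathcal{O}(1-l)^{\oplus l}$ exactly when it has no sections after twisting by $\mathcal{O}(l-2)$. Taking global sections of the twisted sequence, this condition says that
\[
k[X,Y]_{l-2}^{\oplus(l+1)}\to k[X,Y]_{l^2-2},\qquad (h_j)\mapsto \textstyle\sum_j g_jh_j,
\]
is injective. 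If the $g_j$ are linearly independent and span $V$, this is precisely $\ker\beta\cap (V\otimes k[X,Y]_{l-2})=0$ in Lemma \ref{algebra 2}. (The lemma's $V$ should be read as a subspace of $k[X,Y]_{l^2-l}$.) Both conditions, no common zeros of the $f_j$ and injectivity, are Zariski open in the choice of $(f_j)$. So it suffices to find one $(l+1)$-tuple of $f_j$ for which the injectivity holds, and then take a general tuple.

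The main obstacle is that $V$ must lie in the image $U$ of $H^0(\mathcal{O}_{\mathbb{P}^n}(l-1))\to k[X,Y]_{l^2-l}$. This image is the span of products of $l-1$ elements of $W$, and it may be a proper subspace when $n<l$. Lemma \ref{algebra 2} is stated for general $V$ in all of $k[X,Y]_{l^2-l}$, so genericity inside $U$ does not follow directly. When $W=k[X,Y]_l$ (the rational normal curve, $n=l$), the witness $V$ from the proof of the lemma is spanned by $(X^iY^{l-i})^{l-1}$, which lie in $U$, and we are done.

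In general, I would first imitate the lemma's monomial argument with a witness built from $W$. Pick a base-point-free pencil $s,t\in W$ together with further elements of $W$. Then try to show that suitable products $w_0^{\,l-1},\dots,w_l^{\,l-1}$, or mixed products $s^at^bw^c$, generate $k[X,Y]_{l^2-2}$ as a $k[X,Y]_{l-2}$-module in that degree. Since $\dim V\otimes k[X,Y]_{l-2}=\dim k[X,Y]_{l^2-2}$, generation is the same as injectivity. If this becomes messy, the fallback is a degeneration argument. Under a one-parameter family of linear changes on $\mathbb{P}^1$, the nondegenerate system $W$ has a flat limit that is a monomial subspace containing $X^l$ and $Y^l$. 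For such a monomial $W$ one can check surjectivity by hand, as in the lemma, and then spread it back by openness. I expect verifying this spanning statement for an arbitrary (non-monomial) $W$ to be the hardest step.
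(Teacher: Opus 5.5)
There is a genuine gap: the proof does not show that any good tuple $(f_j)$ exists when $n<l$. Everything else is correct. Your construction and reduction produce the same bundle as the paper: the paper's $P_n(l-1)/(V\otimes\mathcal{O}_{\mathbb{P}^n})$ is the dual syzygy bundle of the $(l+1)$-dimensional annihilator of $V$ in $H^0(\mathcal{O}_{\mathbb{P}^n}(l-1))$. Your translation of balancedness into injectivity of $k[X,Y]_{l-2}^{\oplus(l+1)}\to k[X,Y]_{l^2-2}$ is right. So is your correction that the $V$ in Lemma \ref{algebra 2} lives in $k[X,Y]_{l^2-l}$.

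The gap is exactly the step you flag and leave open. Without a witness tuple, the open set you intersect with might be empty. The missing ingredient is the Gruson--Lazarsfeld--Peskine theorem, which the paper uses at this point: a reduced irreducible nondegenerate curve of degree $l$ in $\mathbb{P}^n$ is $m$-normal for every $m\ge l-n+1$. Since $n\ge 2$ (the case $n=l=1$ is trivial), we have $l-1\ge l-n+1$. Hence $H^0(\mathcal{O}_{\mathbb{P}^n}(l-1))\to k[X,Y]_{l^2-l}$ is surjective, so your $U$ is never a proper subspace. You can then lift the monomials $(X^iY^{l-i})^{l-1}$, $0\le i\le l$, to forms $f_i$ of degree $l-1$. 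These satisfy the injectivity, and your openness argument finishes the proof. With this input, your route is a cleaner version of the paper's diagram chase.

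Neither of your proposed workarounds supplies this. The first (showing that suitable products of elements of $W$ generate $k[X,Y]_{l^2-2}$) is only a sketch. Carrying it out amounts to proving a normality statement of the above kind by hand.

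The fallback degeneration fails in general. Suppose $\lim_{t\to 0} g_tW$, with $g_t\in GL_2(k((t)))$, contains two distinct $l$-th powers. Write $g_t=k_1\,\mathrm{diag}(1,t^c)\,k_2$ with $k_1,k_2\in GL_2(k[[t]])$ and $c\ge 0$ (Smith normal form). Comparing $t$-adic valuations of the coefficients of $X^{l-e}Y^e$ shows that $k_2(0)W$ already contains an $l$-th power. So $W$ contains some $L^l$, meaning some hyperplane meets the curve in a single point with multiplicity $l$.

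For $l\ge n+2$ a general rational curve has no such hyperplane; a general smooth rational quintic in $\mathbb{P}^3$ is an example. The reason is that the $l$-th powers form a curve in $\mathbb{P}(k[X,Y]_l)\cong\mathbb{P}^l$, while $\mathbb{P}(W)$ has codimension $l-n\ge 2$. So the monomial limit containing $X^l$ and $Y^l$ does not exist, and there is nothing to spread back by openness.
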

\begin{proof}
Note that since the image of $i$ is not contained in a hyperplane, we have $l\geq n$. 

We first introduce the following notation. For integers
$n,d\ge 1$, let $P_n(d)$ be the dual of the evaluation map
\[
H^0(\mathbb{P}^n,\mathcal{O}_{\mathbb{P}^n}(d))
\otimes
\mathcal{O}_{\mathbb{P}^n}
\longrightarrow
\mathcal{O}_{\mathbb{P}^n}(d).
\]
So, $P_n(d)$ is a globally generated vector bundle on
$\mathbb{P}^n$ fitting into an exact sequence
\begin{equation}\label{syzygy}
0
\longrightarrow
\mathcal{O}_{\mathbb{P}^n}(-d)
\longrightarrow
H^0(\mathbb{P}^n,\mathcal{O}_{\mathbb{P}^n}(d))^*
\otimes
\mathcal{O}_{\mathbb{P}^n}
\longrightarrow
P_n(d)
\longrightarrow
0.
\end{equation}

For example, by the Euler exact sequence $P_n(1)=T_{\mathbb{P}^n}(-1).$

By \cite[Lemma 2.1(a)]{coskun2019normal}, $P_1(d)$ is a balanced vector bundle on $\mathbb{P}^1$,
hence
\begin{equation}\label{p1d}
P_1(d)\cong \mathcal{O}_{\mathbb{P}^1}(1)^{\oplus d}.
\end{equation}

Using \eqref{syzygy}, for $n\ge 2$, we have a natural identification
\[
H^0(\mathbb{P}^n,P_n(d))
=
H^0(\mathbb{P}^n,\mathcal{O}_{\mathbb{P}^n}(d))^*.
\]

Now we return to the proof. Note that by \cite[Theorem, (i)]{gruson1983theorem}, the map 
\[
H^0(\mathbb{P}^n,\mathcal{O}_{\mathbb{P}^n}(l-1))
\xlongrightarrow{\eta}
H^0(\mathbb{P}^1,\mathcal{O}_{\mathbb{P}^1}(l^2-l)),
\] induced by $i$, is surjective. Let $W$ be the kernel of $\eta$. We have the following commutative diagram of vector bundles on $\mathbb{P}^1$ with exact rows and columns:

\begin{equation}\label{1}
\begin{tikzcd}[
column sep=1.5em,
row sep=1.8em
]
&
&
0 \arrow[d]
&
\\
0 \arrow[r]
&
\mathcal{O}_{\mathbb{P}^1}(-(l^2-l))
\arrow[r]
\arrow[d,equal]
&
H^0(\mathbb{P}^1,\mathcal{O}_{\mathbb{P}^1}(l^2-l))^*
\otimes \mathcal{O}_{\mathbb{P}^1}
\arrow[r]
\arrow[d,"\eta^*"]
&
P_1(l^2-l)
\arrow[r]
\arrow[d,"h"]
&
0
\\
0 \arrow[r]
&
\mathcal{O}_{\mathbb{P}^1}(-(l^2-l))
\arrow[r]
&
H^0(\mathbb{P}^n,P_n(l-1))
\otimes \mathcal{O}_{\mathbb{P}^1}
\arrow[r]
\arrow[d]
&
i^*P_n(l-1)
\arrow[r]
&
0
\\
&
&
W^*\otimes\mathcal{O}_{\mathbb{P}^1}
\arrow[d]
&
\\
&
&
0
&
\end{tikzcd}
\end{equation}
Here $h$ is induced by $\eta^*$.
By snake lemma, $h$ is injective with cokernel $\cong W^*\otimes\mathcal{O}_{\mathbb{P}^1},$ so we have short exact sequence
\begin{equation}\label{2}
0
\to
P_1(l^2-l)
\xrightarrow{h}
i^*P_n(l-1)
\xrightarrow{q}
W^*\otimes\mathcal{O}_{\mathbb{P}^1}
\to 0.
\end{equation}

Since $P_1(l^2-l)$ is a globally generated vector bundle on
$\mathbb{P}^1$, we have $H^1(\mathbb{P}^1,P_1(l^2-l))=0.$
So \eqref{2} yields a short exact sequence
\[
0
\to
H^0(\mathbb{P}^1,P_1(l^2-l))
\to
H^0(\mathbb{P}^1,i^*P_n(l-1))
\xrightarrow{q_0}
W^*
\to 0.
\]

By the first and second row of \eqref{1}, we have a commutative diagram with all maps injective
\begin{equation}\label{3}
\begin{array}{ccc}
H^0(\mathbb{P}^1,\mathcal{O}_{\mathbb{P}^1}(l^2-l))^*
&\rightarrow&
H^0(\mathbb{P}^1,P_1(l^2-l))
\\
\downarrow && \downarrow
\\
H^0(\mathbb{P}^n,P_n(l-1))
&\longrightarrow&
H^0(\mathbb{P}^1,i^*P_n(l-1)).
\end{array}
\end{equation}

By this, we will consider the vector spaces in \eqref{3} as subspaces of $H^0(\mathbb{P}^1, i^*P_n(l-1)).$

Note that
\[
q_0\big|_{H^0(\mathbb{P}^n,P_n(l-1))}
:
H^0(\mathbb{P}^n,P_n(l-1))
\to
W^*
\]
is same as the map in $H^0$ induced by the last map in the second column of \eqref{1}, hence it is surjective with kernel $=H^0(\mathbb{P}^1,\mathcal{O}_{\mathbb{P}^1}(l^2-l))^*.$

So, \eqref{3} is a fibre square and 
\[
H^0(\mathbb{P}^n,P_n(l-1))
+
H^0(\mathbb{P}^1,P_1(l^2-l))
=
H^0(\mathbb{P}^1,i^*P_n(l-1)).
\]

Let $a:=l^2-2l+\dim W,$
and let $V\subseteq H^0(\mathbb{P}^n,P_n(l-1))$ a general $a$-dimensional subspace. As $a\ge \dim W,$
we have $q_0(V)=W^*.$ Let $V_1
=V\cap
\ker q_0,$
a general subspace of $H^0(\mathbb{P}^1,\mathcal{O}_{\mathbb{P}^1}(l^2-l))^*$
of dimension $l^2-2l.$ Let $V_2
\subseteq
H^0(\mathbb{P}^1,\mathcal{O}_{\mathbb{P}^1}(l^2-l))
=
k[X,Y]_{l^2-l}
$
be the annihilator of $V_1$.

As
\begin{align*}
a
&=
l^2-2l
+
h^0(\mathbb{P}^n,\mathcal{O}_{\mathbb{P}^n}(l-1))
-
h^0(\mathbb{P}^1,\mathcal{O}_{\mathbb{P}^1}(l^2-l))\\
&=
h^0(\mathbb{P}^n,\mathcal{O}_{\mathbb{P}^n}(l-1))
-l-1\\
&\le
h^0(\mathbb{P}^n,\mathcal{O}_{\mathbb{P}^n}(l-1))
-n-1\\
&=
\operatorname{rank}P_n(l-1)-n,
\end{align*}
$V$ induces a short exact sequence of vector bundles on
$\mathbb{P}^n$:
\begin{equation}\label{11}
0
\to
V\otimes\mathcal{O}_{\mathbb{P}^n}
\to
P_n(l-1)
\to
\mathcal{E}
\to
0.
\end{equation}

So $\mathcal{E}$ is a globally generated vector bundle of rank $l$ on
$\mathbb{P}^n$. We shall show that $i^*\mathcal{E}\cong \mathcal{O}_{\mathbb{P}^1}(l-1)^{\oplus l}.$

It suffices to show that $i^*\mathcal{E}$
is $\mathcal{O}_{\mathbb{P}^1}(l)$-Ulrich, that is, $H^\bullet\!\left(\mathbb{P}^1,(i^*\mathcal{E})(-l)\right)=0.$

As $V_1
\subseteq
H^0(\mathbb{P}^1,P_1(l^2-l)),$
we have an induced map
\[
V_1\otimes\mathcal{O}_{\mathbb{P}^1}
\xrightarrow{\,j'\,}
P_1(l^2-l).
\]

We have the following commutative diagram with exact rows:
\[
\begin{tikzcd}
0 \arrow[r]
&
V_1\otimes\mathcal{O}_{\mathbb{P}^1}
\arrow[r]
\arrow[d,"j'"]
&
V\otimes\mathcal{O}_{\mathbb{P}^1}
\arrow[r,""]
\arrow[d]
&
W^*\otimes\mathcal{O}_{\mathbb{P}^1}
\arrow[r]
\arrow[d,equal]
&
0
\\
0 \arrow[r]
&
P_1(l^2-l)
\arrow[r,"h"]
&
i^*P_n(l-1)
\arrow[r,"q"]
&
W^*\otimes\mathcal{O}_{\mathbb{P}^1}
\arrow[r]
&
0.
\end{tikzcd}
\]

The middle vertical map is the restriction of the injection in \eqref{11} to
$\mathbb{P}^1$ via $i$, hence is injective. So, snake lemma shows $i^*\mathcal{E}\cong \operatorname{coker}(j')$ and $j'$ is injective. So, we get a short exact sequence
\[
0
\to
V_1\otimes\mathcal{O}_{\mathbb{P}^1}(-l)
\xrightarrow{j}
P_1(l^2-l)(-l)
\to
(i^*\mathcal{E})(-l)
\to
0.
\]

It suffices to show $j'$ induces an isomorphism on each $H^i$.

We have a short exact sequence
\[
0
\to
P_1(l^2-l)^*(l-2)
\to
H^0(\mathbb{P}^1,\mathcal{O}_{\mathbb{P}^1}(l^2-l))
\otimes
\mathcal{O}_{\mathbb{P}^1}(l-2)
\to
\mathcal{O}_{\mathbb{P}^1}(l^2-2)
\to
0.
\]

This gives a left exact sequence
\[
0
\to
H^0(\mathbb{P}^1,P_1(l^2-l)^*(l-2))
\to
k[X,Y]_{l^2-l}\otimes k[X,Y]_{l-2}
\xrightarrow{\ \beta\ }
k[X,Y]_{l^2-2}.
\]

By Lemma \ref{algebra 2}, we get
\[
H^0(\mathbb{P}^1,P_1(l^2-l)^*(l-2))
\cap
\left(
V_2\otimes k[X,Y]_{l-2}
\right)
=0
\]
in $k[X,Y]_{l^2-l}\otimes k[X,Y]_{l-2}.$ As $V_2\otimes k[X,Y]_{l-2}$
is the kernel of the surjection
\[
k[X,Y]_{l^2-l}\otimes k[X,Y]_{l-2}
\to
V_1^*\otimes k[X,Y]_{l-2},
\] the map
\[
H^0(\mathbb{P}^1,P_1(l^2-l)^*(l-2))\to V_1^*\otimes k[X,Y]_{l-2}=
H^0\!\left(
\mathbb{P}^1,
V_1^*\otimes\mathcal{O}_{\mathbb{P}^1}(l-2)
\right)
\]

induced by $j^*$ is injective.

By Serre duality, the map on $H^1$ induced by $j$ is surjective.

By \eqref{p1d},
\begin{align*}
h^1\!\left(
\mathbb{P}^1,
P_1(l^2-l)(-l)
\right)
&=
h^1\!\left(
\mathbb{P}^1,
\mathcal{O}_{\mathbb{P}^1}(1-l)
^{l^2-l}\right)\\
&=
(l^2-l)(l-2)\\
&=
(l-1)\dim V_1\\
&=
h^1\!\left(
\mathbb{P}^1,
V_1\otimes\mathcal{O}_{\mathbb{P}^1}(-l)
\right).
\end{align*}

So, the map on $H^1$ induced by $j$ is an isomorphism.

By \eqref{p1d},
\[
h^0\!\left(
\mathbb{P}^1,
P_1(l^2-l)(-l)
\right)
=
h^0\!\left(
\mathbb{P}^1,
\mathcal{O}_{\mathbb{P}^1}(l-1)
\right)^{l^2-l}
=0
=
h^0\!\left(
\mathbb{P}^1,
V_1\otimes\mathcal{O}_{\mathbb{P}^1}(-l)
\right).
\]
So the map on $H^0$ induced by $j$ is also an isomorphism.
\end{proof}

Now we are ready to prove Theorem \ref{D}.

\textit{Proof of Theorem \ref{D}:}
\begin{proof}
\textit{(1):} Let $g$ be the genus of $X$. Choose a general line bundle on $X$ of degree $g-1$, so $\omega_X\otimes L^{-1}$ is also a general line bundle of degree $g-1$, where $\omega_X$ is the canonical line bundle of $X$. Hence $$h^0(X,L)=h^0(X,\omega_X\otimes L^{-1})=0.$$ So, by Serre duality, $$H^\bullet(X,L)=0=H^\bullet(X,\omega_X\otimes L^{-1})=0.$$ Now let $\mathcal{F}=L\oplus(\omega_X\otimes L^{-1}).$ So, $H^\bullet(X,\mathcal{F})=0$, hence $\mathcal{F}(1)$ is Ulrich. Further, $\textrm{det }\mathcal{F}\cong \omega_X$ extends to a line bundle on $\mathbb{P}^n$, as $X$ is a complete intersection. Now by \cite{mathur2020extending}, we see that the Ulrich bundle $\mathcal{F}(1)^{\oplus r}$ extends to a vector bundle on $\mathbb{P}^n$ for any integer $r\geq n/2.$

\textit{(2):}
Let $i:\mathbb{P}^1\hookrightarrow \mathbb{P}^n$
be an embedding with image $X$.

By Theorem \ref{key}, there is a globally generated vector bundle
$\mathcal{E}$ on $\mathbb{P}^n$ with
\[
i^*\mathcal{E}\cong \mathcal{O}_{\mathbb{P}^1}(l-1)^{\oplus l}.
\]

Let $\widetilde{\mathcal{F}}:=\mathcal{E}(a-1)^{\oplus \frac rl}.$
We have $i^*\mathcal{O}_X(a)
\cong
\mathcal{O}_{\mathbb{P}^1}(al),$
and the only $\mathcal{O}_{\mathbb{P}^1}(al)$-Ulrich bundle on
$\mathbb{P}^1$ of rank $r$ is $\mathcal{O}_{\mathbb{P}^1}(al-1)^{\oplus r}\cong i^*\widetilde{\mathcal{F}}$. So, $\widetilde{\mathcal{F}}|_X$
is the unique $\mathcal{O}_X(a)$-Ulrich bundle of rank $r$,
hence $\widetilde{\mathcal{F}}|_X\cong \mathcal{F}.$

Finally, as $\mathcal{E}$ is globally generated, so $\widetilde{\mathcal{F}}(1-a)$
is globally generated. 
\end{proof}
\begin{remark}
    In the notations of the proof of Theorem \ref{D}(2), note that
\[
\tau(\widetilde{\mathcal{F}})
\le
\mu(\widetilde{\mathcal{F}}(1-a))
=
\mu(\mathcal{E}^{\oplus{\frac{r}{l}}})
=
\mu(\mathcal{E})
=
\frac{l-1}{l}
<1.
\]
Hence, case (2) of Theorem \ref{B} indeed occurs. Clearly, cases (1) and (3) also occurs. We do not know whether cases (4) or (5) can occur.
\end{remark}

\section{Acknowledgement}
 I thank Prof. János Kollár and Iustin Coanda and insightful discussions. I also thank Anindya Mukherjee for making me interested in Ulrich bundles.
\printbibliography
\begin{flushleft}
{\scshape Department of Mathematics, Fine Hall, Princeton University, Princeton, NJ 700108, USA}.
{\fontfamily{cmtt}\selectfont
\textit{Email address: ss6663@princeton.edu} }
\end{flushleft}

\end{document}